\def \SO{\mathop\mathrm{SO}}
\def \SL{\mathop{SL}}
\def \R{\mathbb{R}}
\def \Z{\mathbb{Z}}
\newcommand{\LL}{{\mathcal L}}
\def \e{\varepsilon}
\def \F{F}
\def \M{\mathcal M}
\newtheorem{theorem}{Theorem}
\newtheorem{lemma}[theorem]{Lemma}
\newtheorem{corollary}[theorem]{Corollary}
\newtheorem{proposition}[theorem]{Proposition}
\theoremstyle{remark}
\newtheorem*{remark}{Remark}
\begin{document}
\title[Space of non-degenerate curves in a Riemannin manifold]{The space of non-degenerate closed curves in a Riemannian manifold}
\author{J.~Mostovoy}
\address{CINVESTAV, Col. San Pedro Zacatenco,
M\'exico, D.F. CP 07360}
\email{jacob.mostovoy@gmail.com}
\author{R.~Sadykov}
\address{CINVESTAV, Col. San Pedro Zacatenco,
M\'exico, D.F. CP 07360}
\email{rstsdk@gmail.com}

\subjclass[2000]{Primary: 53A04; Secondary: 53C21, 53C42}

\date{}

\begin{abstract}

Let  $\LL M$ be the semigroup of non-degenerate based loops with a fixed initial/final frame in  a Riemannian manifold $M$ of dimension at least three. 
We compare the topology of $\LL M$ to that of the loop space $\Omega FTM$ on the bundle of frames in the tangent bundle of $M$. We show that $\Omega FTM$ is the group completion of $\LL M$, and prove that it is obtained by localizing $\LL M$ with respect to adding a ``small twist''.
\end{abstract}

\maketitle

\section{Introduction}
A smooth curve in an $n$-dimensional Riemannian manifold $M$ is called {\em free of order $k$} if its first $k$ covariant derivatives are linearly independent at each point. Free curves of order smaller than $n$ were studied by Smale \cite{Smale}, Feldman \cite{Feldman} and Gromov \cite{Gromov}, who showed that they satisfy the $h$-principle. In particular,  when $k<n$, the space of all based free curves of order $k$ is homotopy equivalent to the loop space on the bundle of $k$-dimensional frames in the tangent bundle of $M$.

When $k=n$, the $h$-principle fails to be true already for the simplest examples $M=S^2$, see \cite{Li70}, and $M=\R^{2m}$ with $m>1$, see \cite{Sh}. We show that a somewhat weaker statement holds: for any $n>2$ the loop space on the bundle of $n$-frames in $TM$ is the group completion of the monoid of free curves of order $n$ in $M$. Moreover, the group completion is achieved by localizing with respect to the multiplication by any ``small'' curve.

The spaces of free curves of order $n$ in $n$-manifolds, also called {\em non-degenerate} curves (the terminology varies in the literature) are the subject of an extensive and ongoing study, with significant advances in special cases \cite{NewSal}. In particular, our result builds heavily on the work of M.~Shapiro \cite{Sh} and N.~Saldanha and B.~Shapiro \cite{ShSal}. The main technical tool that we borrow from  \cite{ShSal} are the ``telephone wires'' which allow to approximate in an appropriate sense any curve  by a non-degenerate one. This idea goes back to Gromov (convex integration, \cite{Gromov}), Eliashberg-Mishachev
(holonomic approximation theorem, \cite{EM}) and Thurston (corrugations, \cite{OutsideIn}),
and in some form was used by Rourke and Sanderson (compression theorem, \cite{RS}).

A strong motivation for much of the research in the area is the fact that non-degenerate curves appear in the study of linear ordinary differential equations~\cite{BST06,BST09,KhSh,Sh90,ST}. For example, the space of linear ordinary differential equations of order $n+1$ on $S^1$ that have $n+1$ independent solutions is homotopy
equivalent to the space of based non-degenerate curves in $S^n$.

\section{Statement of the results}
A $C^n$-differentiable curve $\gamma$ in a Riemannian manifold $M$ of dimension $n$ is said to be {\em non-degenerate} if at each moment of time $t$ the set of covariant derivatives
$$\{ \gamma'(t),\ldots, \gamma^{(n-1)}(t), \gamma^{(n)}(t)\}$$
spans the tangent space $T_{\gamma(t)} M$. Here $\gamma^{(k)}(t)$ for $k>1$ is defined as $\nabla \gamma^{(k-1)}(t)$. If the curve $\gamma$ is non-degenerate, the ordered set of its first $n$ covariant derivatives is referred to as the {\em frame of $\gamma$} at time $t$ and is denoted by  $\F_\gamma (t)$. 

Assume that a basis $F_0$ is chosen in the tangent space to $M$ at a point $x_0\in M$.  Consider the topological space
$\LL M$ consisting of all non-degenerate curves
$$\gamma: [0,a] \to M,$$
such that
$$\gamma(0)=\gamma(a)=x_0$$
and
$$\F_{\gamma}(0)=\F_{\gamma}(a)=F_0,$$
where $a$ is a positive number.

The space $\LL M$ is a topological semigroup: the multiplication is given by concatenating the curves.  Let $FTM$ be the bundle of frames in the tangent bundle $TM$, with the basepoint being the frame $F_0$, and $\Omega FTM$ be its Moore loop space. The {\em frame map}
$$\LL M \to \Omega FTM$$
is defined as
$$\gamma \to F_\gamma.$$
It is a homomorphism of topological semigroups.

\begin{remark} Often, the frame of the derivatives of a curve is orthogonalized and normalized; the result is called the {\em Frenet frame}. Accordingly, instead of the frame map that we consider here it is  customary to consider the Frenet frame map. If the space of all frames is identified with the Lie group $\SL(n)$, the orthonormal frames correspond to $\SO(n)$. The two groups are homotopy equivalent and, in the context of the present paper, it will make no difference which version is used to state the results.
\end{remark}

Observe that if we add a disjoint basepoint to $\LL M$, the resulting space $\LL M_+$ is a topological monoid: the basepoint can be thought of as representing the ``curve of zero length'' and is the neutral element with respect to the concatenation.  We shall prove the following result:
\begin{theorem}\label{completion}
For a Riemannian manifold $M$ of dimension at least three, the homotopy-theoretic group completion $\Omega B \LL M_+$ of the monoid $\LL M_+$ is homotopy equivalent to $\Omega FTM$.
\end{theorem}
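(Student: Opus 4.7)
The plan is to apply a version of the McDuff--Segal group completion theorem to the frame map. Since the frame map $\LL M \to \Omega FTM$ is a homomorphism of topological semigroups and $\Omega FTM$ is group-like (concatenation of Moore loops admits homotopy inverses), extending by the disjoint basepoint gives a monoid map $\LL M_+ \to \Omega FTM$ that factors through a canonical comparison map
$$\Phi : \Omega B\LL M_+ \longrightarrow \Omega FTM.$$
To prove that $\Phi$ is a weak equivalence it suffices, by the group completion theorem, to exhibit a central element $\tau \in \pi_0 \LL M$ (the ``small twist'') and to show that localization at $\tau$ produces an isomorphism
$$H_*(\LL M_+)\bigl[\tau^{-1}\bigr] \;\xrightarrow{\cong}\; H_*(\Omega FTM).$$

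The first step is to identify $\tau$. I would take $\tau$ to be a non-degenerate closed curve supported in a small Riemannian ball around $x_0$ whose frame map $F_\tau$ represents a generator of the relevant component of $\pi_1 FTM$ coming from a loop in the fiber $\SL(n)$ (or $\SO(n)$). Because $n \geq 3$ the orthogonal group has abelian fundamental group, and because $\tau$ can be realized in an arbitrarily small neighborhood of $x_0$ it can be slid past any other non-degenerate loop by a homotopy of non-degenerate curves; this is the content of centrality and is where the dimension hypothesis is essential. The construction of $\tau$ and the sliding argument are direct adaptations of the techniques of Shapiro \cite{Sh} and Saldanha--Shapiro \cite{ShSal}.

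The second step, and the core of the argument, is to use the ``telephone wire'' approximation to compare $\LL M$ to $\Omega FTM$ in homology. Given a compact family of loops $K \to \Omega FTM$, the telephone wire construction produces, after insertion of sufficiently many copies of $\tau$, a family $K \to \LL M$ of non-degenerate curves whose frame map is homotopic to the original family. This shows surjectivity of $\Phi_*$ on homotopy groups. Dually, if two families of non-degenerate curves have frame maps that are homotopic as families of loops in $FTM$, then another parametrized telephone wire argument, again at the cost of multiplying by some power of $\tau$, produces a homotopy between them in $\LL M$. Together these two statements say exactly that the $\tau$-localized homology of $\LL M_+$ maps isomorphically onto $H_*(\Omega FTM)$, from which the theorem follows.

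The main obstacle is the parametrized version of the telephone wire construction: while Shapiro and Saldanha--Shapiro establish the approximation statement for individual curves and one-parameter deformations, here one needs it for arbitrary compact parameter spaces and, crucially, in a form compatible with concatenation by a fixed twist $\tau$, so that the comparison passes cleanly to classifying spaces via group completion. The delicate point is bookkeeping: one must ensure that the number of twists inserted can be controlled uniformly over the parameter space and that the resulting family depends continuously on the parameters, so that the induced maps $\LL M_+ \xrightarrow{\cdot \tau^k} \LL M_+$ produce a directed system whose colimit computes $H_*(\Omega FTM)$.
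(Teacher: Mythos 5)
Your proposal follows essentially the same route as the paper: a central ``small twist'' $\omega$, the McDuff--Segal group completion theorem, and the parametrized telephone-wire approximation are exactly the ingredients used there. The only organizational difference is that the paper isolates the weak equivalence $\LL M_\omega\simeq \Omega FTM$ (the telescope of left multiplications by $\omega$) as a separate theorem proved via the telephone wires, and then deduces the present statement by verifying the right-fraction hypotheses of the group completion theorem and using an $h$-space argument on $\pi_1$ to upgrade the resulting homology isomorphism to a weak equivalence.
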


In fact, we shall describe the group completion of $\LL M_+$ explicitly, in terms of the localization with respect to the left multiplication by one fixed element. 

\medskip
 
In the case when $M=\R^n$ we take the origin to be the basepoint and the standard basis to be the chosen frame. The space $FT\R^n$ is homeomorphic to $ \SL(n)\times \R^n$.
For a general $M$, identifying $\R^n$ with $T_{x_0} M$ in the manner that preserves the chosen frames, we get the exponential map
$$\exp_{x_0}: B_{R}^n\to M$$
defined on an open ball $B_{R}\subset \R^n$ of radius $R$; we can take $R$ to be infinite if $M$ is complete. Take an arbitrary  curve $\alpha$ in $\LL\R^n$.
We shall see that there exists a positive number $\lambda_0$ such that for all positive $\lambda\leq\lambda_0$ the curves $\lambda\alpha$ are in $B_{R}^n$ and their images $\exp_{x_0}(\lambda\alpha)$ in $M$ are all non-degenerate. Define $\omega\in\LL M$ as
$$\omega=\exp_{x_0}(\lambda_0\alpha).$$

Write $\LL M_\omega$ for the localization of $\LL M$ by the left multiplication by $\omega$, that is,  the direct limit of the sequence of embeddings
$$\LL M\stackrel{\omega\cdot}{\longrightarrow}\LL M\stackrel{\omega\cdot}{\longrightarrow}\LL M\stackrel{\omega\cdot}{\longrightarrow}\ldots$$
The frame map sends the left multiplication by $\omega$ in $\LL M$ to the left multiplication by $F_{\omega}$ in $\Omega FTM$. This latter map is a homotopy equivalence, and the direct limit of a sequence of multiplications by $F_\omega$ in $\Omega FTM$ is again homotopy equivalent to $\Omega FTM$. In particular, the frame map descends to a map $$\LL M_\omega\to \Omega FTM.$$

The main technical result of this note is the following statement:
\begin{theorem}\label{main}
For a Riemannian manifold $M$ of dimension at least three, the space  $\LL M_\omega$  is  weakly homotopy equivalent to $\Omega FTM$ and the equivalence is given by the  frame map.
\end{theorem}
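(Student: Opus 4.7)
The plan is to check that the frame map induces isomorphisms $\pi_k(\LL M_\omega)\to\pi_k(\Omega FTM)$ for every $k\ge 0$. The factorization through $\LL M_\omega$ is well-defined because left multiplication by $F_\omega$ on $\Omega FTM$ is a homotopy self-equivalence (the Moore loop space on the Lie group fibration $FTM$ is group-like), so the direct limit of $(\Omega FTM,F_\omega\cdot)$ is $\Omega FTM$ itself. It therefore suffices to realize every $\pi_k$-class of $\Omega FTM$ by a non-degenerate curve after multiplication by some power of $\omega$, and to show that any framed-nullhomotopy in $\Omega FTM$ of such a realization can be promoted to a nullhomotopy in $\LL M$ after a further multiplication.

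For surjectivity, I would take a compact family $\eta\colon (D^k,\partial D^k)\to (\Omega FTM,\mathrm{const}_{F_0})$ and approximate it, uniformly in the parameter, by a family of non-degenerate curves via telephone wires. The telephone-wire/corrugation construction of Shapiro and Saldanha--Shapiro allows one to deform a given smooth path in $M$ by small-amplitude oscillations so that the resulting curve is non-degenerate and its frame is $C^0$-close to any prescribed path in $FTM$ lying above the original path. Carrying this out parametrically yields non-degenerate curves $\tilde\gamma_x$, $x\in D^k$, whose frame paths are $C^0$-close to $\eta(x)$; the endpoint frames of $\tilde\gamma_x$, however, will differ slightly from $F_0$ and from each other. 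To repair this, I would pre- and post-compose by fixed non-degenerate segments extracted from $\omega$ inside which these small endpoint errors can be absorbed by a small, parameter-continuous adjustment of the frame --- using that $\omega$ has a genuine non-degenerate interior where the Frenet frame fills out an open set of frames near $F_0$. The result is a continuous map $D^k\to \LL M$ representing $F_\omega^N\cdot\eta$ for some uniform $N$, hence a representative of $\eta$ in $\LL M_\omega$.

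Injectivity is handled by the same device one dimension higher. Given two non-degenerate families $\gamma_0,\gamma_1\colon S^{k-1}\to \LL M$ and a framed homotopy between their frame maps in $\Omega FTM$, I would apply the parametric telephone-wire approximation to this homotopy, regarded as a map $D^k\to \Omega FTM$, and then use the $\omega$-buffers to modify the endpoint frames to $F_0$ throughout the homotopy and to reconcile the restriction of the approximation on $S^{k-1}$ with $\gamma_0,\gamma_1$ themselves. After multiplying $\gamma_0,\gamma_1$ by a sufficient power of $\omega$, the approximation agrees with them on the boundary, producing a nullhomotopy in $\LL M$ of the difference class.

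The main obstacle is the uniform parametric version of the telephone-wire construction together with its compatibility with prescribed endpoint frames. Individually, the corrugation amplitude needed to make a degenerate curve non-degenerate depends pointwise on how far the curve is from being non-degenerate; in a continuous family this amplitude varies, but by compactness of $D^k$ one can choose a single scale that works throughout. The more delicate issue is that the approximation only controls the frame up to small $C^0$-error, whereas the target space $\LL M$ requires an \emph{exact} match with $F_0$ at both endpoints; this is precisely what the multiplication by $\omega$ resolves, by supplying a rigid non-degenerate ``collar'' at each endpoint inside which a continuous family of small frame corrections can be inserted. Making this correction procedure genuinely continuous in the parameter, and bounding $N$ uniformly so that the same $\omega^N$ works for the whole family, is the core technical step.
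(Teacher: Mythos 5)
Your overall strategy (telephone-wire approximation plus localization by $\omega$ to absorb endpoint errors) is the right one and matches the paper's, but the key technical claim you lean on is false as stated. You assert that the corrugation of a path $\gamma$ in $M$ can be arranged so that its frame path is $C^0$-close to \emph{any prescribed lift} of $\gamma$ to $FTM$. It cannot: non-degeneracy of the corrugated curve is produced precisely by making its covariant derivatives dominated by those of the rapidly oscillating model curve $\alpha$, so the frame of the telephone wire necessarily winds around the fibre $\SL(n)$ on the order of $N$ times and is nowhere near a slowly varying prescribed frame in the $C^0$ sense. What is true --- and what the paper proves --- is only a homotopy statement: the frame of the corrugated curve is \emph{homotopic} to the prescribed path, because the rapid winding decomposes into $N/2$ insertions of the null-homotopic loop $F_{\omega^2}$ (the cut-and-paste argument of Lemma~\ref{sasha}). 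Without this your surjectivity argument has no way to identify which class of $\Omega FTM$ a given corrugated family represents, and your injectivity argument cannot reconcile the boundary of the approximated nullhomotopy with the original families $\gamma_0,\gamma_1$: that reconciliation is exactly the content of Lemma~\ref{appr2}, which shows the corrugation of a curve already in $\LL M$ is homotopic \emph{through non-degenerate curves} to $\gamma\cdot\omega^N$. For $k=0$ this is where the failure of the $h$-principle (the extra component of $\LL\R^{2m}$) is actually dealt with, and it requires Shapiro's theorem as input; your proposal is silent on $\pi_0$.

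Two further differences from the paper are worth noting. First, the paper does not try to hit an arbitrary class $\eta\in\pi_k\Omega FTM$ in one step; it uses the fibration $\Omega\SL(n)\to\Omega FTM\to\Omega M$, realizing the projection of $\eta$ to $\Omega M$ by telephone wires and then correcting in the fibre direction by transplanting the $\R^n$ case into $M$ via the exponential map (Lemma~\ref{lemma:exp} and Corollary~\ref{another}). This separation is what lets each piece be controlled by a lemma with an explicit homotopy-class identification. Second, your ``$\omega$-buffer'' device for forcing exact endpoint frames is plausible but unproved: you need a non-degenerate arc joining the perturbed endpoint frame to $F_0$, continuously in the parameter, and you give no construction. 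The paper instead enlarges the target to the space $\LL M(\delta)$ of piecewise non-degenerate curves with $\delta$-small frame jumps and proves (Proposition~\ref{smoo}, by mollification and interpolation against a fixed reference curve) that the inclusion $\LL M\hookrightarrow\LL M(\delta)$ is a weak equivalence; some such statement is needed to close your argument.
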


Theorem~\ref{main} is proved in Section~\ref{next} and in Section~\ref{theoneafter} we show how it implies Theorem~\ref{completion}.


\section{Proof of Theorem~\ref{main}}\label{next}

\subsection{The proof for non-degenerate curves in $\R^n$}
It is known \cite{Li71, Sh, ShaSha} that the set $\pi_0 \LL\R^n$ consists of two elements when $n$ is odd and of three elements when $n$ is even. The concatenation of curves gives  $\pi_0 \LL\R^n$ the structure of a semigroup and the  frame map
$$\pi_0 \LL\R^n\to\pi_0 \Omega \SL(n)=\Z/2$$
is an isomorphism when $n$ is odd and a group completion when $n$ is even. Of the three components of $\pi_0 \LL\R^{2k}$ two are mapped to the generator of $\Z/2$; of these two one is sent to the other by the left multiplication by  the class of 
any element in $\LL\R^{2k}$ defining a contractible loop in $\SL(2k)$ (see \cite{Sh}).  In particular, the localization of $\pi_0 \LL\R^n$ with respect to the left multiplication by any of its elements is the same thing as its group completion, and is isomorphic to $\Z/2$.

Therefore, we only need to show that each component of $\LL\R^n_\omega$ has the same homotopy groups as a component of $\Omega \SL(n)$ and the isomorphism is induced by the  frame map. This, essentially, was proved by Saldanha and Shapiro \cite{ShSal}. Their argument goes as follows.

Fix $\omega\in\LL \R^n$ parametrized by $[0,1]$ and consider a curve $$A:[0,a]\to \SL(n)$$
with the property that $A(t)=\rm{Id}$ when $t\in[0,\e]\cup [a-\e,a]$
for some small $\e>0$. For a positive integer $N$ let
$$A^{[N]}:[0,a]\to \R^n$$ be the curve defined as $$A^{[N]}(t)=A(t)\omega(Nt/a).$$
In what follows, when referring to a {\em compact family of curves} $\gamma_r$ we shall mean that $r$ varies over a compact set.
\begin{lemma}\label{sasha}
Given a compact family of curves $A_r\in \Omega SL(m)$, each $A_r$ parametrized by $[0,a_r]$ and constant in a neighbourhood of $\{0\}\cup\{a_r\}$,
we can find $N$ such that
\begin{itemize}
\item each curve  $A_r^{[N]}$ is non-degenerate;
\item the  frame map sends the family $A_r^{[N]}$ to a family of curves homotopic to $A_r$.
\end{itemize}
Moreover, if the curve $A_r$ is in the image of the  frame map, that is, $A_r=F_{\gamma_r}$, then $A_r^{[N]}$ is homotopic to $\gamma_r\cdot \omega^N$ through non-degenerate curves. 
\end{lemma}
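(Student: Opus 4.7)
The plan is to differentiate $A^{[N]}(t)=A(t)\,\omega(Nt/a)$ by the Leibniz rule, notice that for large $N$ the high-frequency $\omega$-derivatives dominate, and exploit this to deduce non-degeneracy and the required homotopies. Leibniz gives
$$(A^{[N]})^{(k)}(t)=\sum_{j=0}^{k}\binom{k}{j}A^{(k-j)}(t)\,(N/a)^{j}\,\omega^{(j)}(Nt/a),$$
so, collecting columns of the frame matrix and factoring the diagonal rescaling $D_N=\mathrm{diag}(N/a,(N/a)^2,\ldots,(N/a)^n)$ out on the right,
$$\F_{A^{[N]}}(t)=A(t)\,\F_{\omega}(Nt/a)\,D_N+R_N(t),$$
with $\|R_N(t)D_N^{-1}\|\to 0$ uniformly in $t$ and in the family parameter $r$. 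Since $\omega$ is non-degenerate, $|\det \F_{\omega}|$ is uniformly bounded below, so $\F_{A^{[N]}}(t)$ is invertible for every $t$ and every $r$ once $N$ is large enough, giving the first bullet.

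For the second bullet, normalize determinants to land in $\SL(n)$. The asymptotic identity above provides, via a straight-line homotopy in the invertible matrices followed by projection, a homotopy in $\Omega \SL(n)$ from $\widetilde\F_{A^{[N]}}$ to the pointwise product $t\mapsto A(t)\,\widetilde\F_{\omega}(Nt/a)$. The second factor traverses the loop $\widetilde\F_\omega$ a total of $N$ times, so since $\pi_1 \SL(n)=\Z/2$ for $n\geq 3$, choosing $N$ to be a large even integer makes this factor nullhomotopic and renders the product homotopic to $A$ in $\Omega \SL(n)$. Basepoint discrepancies coming from the constant rescaling $D_N/(\det D_N)^{1/n}$ are absorbed by a path to the identity in $\SL(n)$.

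The ``moreover'' clause is the main obstacle, since the homotopy must proceed through non-degenerate curves in $\R^n$ rather than merely through $\Omega \SL(n)$. Assuming $A=\F_\gamma$, the plan is to construct $h_s$, $s\in[0,1]$, from $A^{[N]}$ to $\gamma\cdot\omega^N$ by gradually transferring the twist encoded in $A$ from modulating the high-frequency $\omega$ oscillations to being realized as an initial pass along $\gamma$: on an initial interval of length $sa$ one traces a time-compressed $\gamma$, and on the remainder one performs an adjusted telephone-wire construction in which the twisting contribution of $A$ is suitably truncated and matched $C^n$-smoothly at the junction, arranged so that $h_0=A^{[N]}$ and $h_1=\gamma\cdot\omega^N$ up to reparametrization. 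The Leibniz estimate above persists uniformly along the homotopy because its constants depend only on the bounded $C^n$-norms of the $A$-factor, so for $N$ large the whole family $h_s$ stays non-degenerate. The technical difficulty is arranging the junction matching and the reparametrization so that this uniform estimate survives simultaneously in $s$ and in $r$; this is essentially the Saldanha--Shapiro construction invoked by the paper.
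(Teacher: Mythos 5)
Your treatment of the two bulleted claims is sound and essentially the paper's: the Leibniz expansion with dominant term $(N/a)^k A(t)\omega^{(k)}(Nt/a)$ is exactly the asymptotic the paper invokes for non-degeneracy, and your identification of the normalized frame with the pointwise product $t\mapsto A(t)\widetilde{\F}_\omega(Nt/a)$, combined with the fact that pointwise multiplication of based loops in a topological group agrees up to homotopy with concatenation and that $[\F_\omega]^N$ vanishes in $\pi_1\SL(n)=\Z/2$ for $N$ even, is a legitimate (slightly more algebraic) substitute for the paper's cut-and-paste with the beads $f_i=A(2ia/N)F_{\omega^2}$.

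The gap is in the ``moreover'' clause, which you yourself flag as unresolved --- and it is the part of the lemma that actually requires an idea, since the homotopy must stay inside the non-degenerate curves. Your proposed family $h_s$, a slow pass along $\gamma$ on $[0,sa]$ glued $C^n$-smoothly to a truncated telephone wire on $[sa,a]$, runs into trouble precisely at the junction: there the curve must interpolate between derivatives of size $O(1)$ (the slow $\gamma$-segment) and derivatives of size $O(N^k)$ (the oscillating part), and in that transition window the frame can degenerate; your uniform Leibniz estimate gives no control there because the curve is no longer of the product form $A(t)\omega(Nt/a)$. The paper sidesteps this with a different decomposition. For $N$ even, cut $\gamma$ into $N/2$ equal segments and insert after the $i$-th segment a copy of $\omega^2$ translated to $\gamma(2i/N)$ and linearly transformed so that its initial and final frames equal $\F_\gamma(2i/N)$. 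This beaded concatenation is $C^n$-close to $A^{[N]}$ for $N$ large, so linear interpolation already gives a homotopy through non-degenerate curves; and --- the key point --- every curve obtained from it by moving the cut points is again a concatenation of non-degenerate pieces with matching frames, so sliding all cut points to the terminal endpoint is a homotopy through (piecewise) non-degenerate curves that ends at $\gamma\cdot\omega^N$. No junction estimate is needed because the beads are inserted as whole loops whose frame returns to a translate of $F_0$. To complete your proof you should either adopt this bead decomposition or supply the missing non-degeneracy estimate at your junction, uniformly in $s$ and $r$; as written, that step is asserted rather than proved.
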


\begin{proof}
Indeed, the non-degeneracy of $A_r^{[N]}$ follows from the fact that
asymptotically, as $N\to \infty$, we have
$$\bigl(A(t)\omega(Nt/a)\bigr)^{(k)}\sim N^k\cdot A(t)\omega^{(k)}(Nt/a).$$
To prove the second claim, take $N$ to be even. Cut the curve $A$ into  $N/2$ pieces 
$$A_i=A|_{[2(i-1)a/N, 2ia/N]}$$ and write $f_i$ for the curve $A(2ia/N)
F_{\omega^2}$. Consider the concatenation of curves
$$A_1f_1 A_2 f_2\ldots A_{N/2} f_{N/2}.$$
On one hand, this curve is homotopic to $A$. This is obvious since $F_{\omega^2}$ is null-homotopic. On the other
hand, it is close, and homotopic,  to $F_{A^{[N]}(t)}$ if $N$ is sufficiently big.
See \cite{ShSal} for details.

If $A$ is in the image of the  frame map, $A=\F_{\gamma}$, the above cut and paste construction can be described in terms of non-degenerate curves. One cuts the curve $\gamma$ into $N/2$ segments of equal length and inserts a suitably translated copy of $\omega^2$ after each segment; then one should take the  frame map. Now, if we shrink the lengths of all the segments of $\gamma$, apart from the first one, to zero, while preserving the total length, we obtain the curve $\gamma\cdot \omega^N$. Performing this shrinking uniformly on all the curves of a family $\gamma_r$, we get a homotopy between the family $\gamma_r\cdot\omega^N$ and a family which is close and, hence, homotopic, to $A_r^{[N]}$. 

\end{proof}

As a consequence of Lemma~\ref{sasha}, all the elements of all the homotopy groups of $\Omega \SL(n)$ can be represented by families of
 frames of non-degenerate curves. Moreover, 
if two classes in $\pi_k\LL\R^n$ have the same image in $\pi_k \Omega \SL(n)$, they become equal in $\pi_k \LL\R^n_\omega$.

\subsection{Piecewise $C^n$ curves in Riemannian manifolds}

In order to extend the argument to arbitrary Riemannian manifolds, we need to enlarge the space of non-degenerate curves by the curves whose frame can experience a small jump in a finite number of points. 

Let us say that two $n$-frames $L_1$ and $L_2$ in $\R^n$ are $\e$-close if $L_1L_2^{-1}$ and $L_2 L_1^{-1}$, thought of as elements of $\SL(n)$, are $\e$-close to the identity in  the usual matrix norm. Define $\LL M (\delta)$ as the space of $C^1$-curves which are piecewise $C^n$ and non-degenerate, with the initial and the final frame $\delta$-close to the chosen frame in $M$ at $x_0$, and such that the limits of the frames on the right and on the left at any point are $\delta$-close. The topology on $\LL M (\delta)$ is given by the Sobolev $H^n$ metric.  

\begin{proposition}\label{smoo}
The inclusion map $\LL M\to \LL M(\delta)$ is a weak homotopy equivalence for sufficiently small $\delta$.
\end{proposition}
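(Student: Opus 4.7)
The plan is to construct, for each compact family of curves in $\LL M(\delta)$, a homotopy to a family lying in $\LL M$, carried out so that it restricts to the identity on subfamilies already in $\LL M$. The basic building block is a local replacement operation that smooths a single frame discontinuity while preserving non-degeneracy; iterating it over the finitely many internal jump points of $\gamma$ and applying it once at each endpoint produces the required deformation.

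The local model works as follows. Near a point $t_0$ where the frame of $\gamma$ jumps from $L^-$ to $L^+$, use the exponential map at $\gamma(t_0)$ to identify a neighborhood of $\gamma(t_0)$ with a ball in $\R^n$. Since $L^-$ and $L^+$ are $\delta$-close in $\SL(n)$, a short smooth path $A(s)$ in $\SL(n)$ joins them. Excise a tiny subinterval $[t_0-\eta,t_0+\eta]$ and insert a short non-degenerate $C^n$ curve whose frame is close to $A(s)$ between the endpoints. Such a replacement is produced by a one-parameter version of the telephone-wire construction underlying Lemma~\ref{sasha}: take a strongly scaled copy of a fixed non-degenerate model curve in $\R^n$, twisted along $A(s)$, and concatenate it with the truncated arcs; the $C^1$ matching condition at the two edges of the window is arranged by a further small correction since $\gamma'$ is continuous across $t_0$. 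For $\delta$ and $\eta$ small, the inserted piece is $C^0$-close to $\gamma(t_0)$, the inserted frame stays in a small neighborhood of $L^-$, and the resulting curve is $H^n$-close to the original and remains non-degenerate.

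Applied at $t=0$ and $t=a$, the same construction brings the boundary frames exactly to $F_0$. To carry this out over a compact family one parametrizes continuously the jump locations $t_i(\gamma)$, the jump sizes, and the insertion windows; the scaling factor of the inserted model curve is chosen as a continuous function of the jump size that vanishes with it. For curves already in $\LL M$ every jump is trivial, the inserted model has zero amplitude, and the construction is the identity. Thus the inclusion $\LL M\hookrightarrow\LL M(\delta)$ has a retraction up to homotopy on each compact subset of any relative sphere, which yields the weak equivalence.

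The main obstacle is continuity in families where the combinatorial pattern of the jump set changes: two jumps can collide, or a jump can shrink to zero and disappear. The resolution is to treat the allowed jump set as a finite but variable list of candidate points, some of which carry the identity jump, and to let the insertion window radius $\eta_i$ be a continuous nonnegative function of the parameter, bounded above by a fraction of the distance to the nearest neighbor and vanishing with the corresponding jump amount. A secondary, routine, issue is that the total parameter length grows by the inserted pieces; a continuous rescaling of $[0,a_r]$ at the end absorbs this without affecting non-degeneracy.
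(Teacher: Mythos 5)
There is a genuine gap: your local surgery does not actually land in $\LL M$. Membership in $\LL M$ requires the curve to be $C^n$ with no frame jumps at all, which at each seam of an excision window means the inserted arc must match the ambient curve's full $n$-jet --- equivalently, its frame must equal $L^-$ and $L^+$ \emph{exactly} at the two edges. But the telephone-wire mechanism you invoke only produces a non-degenerate curve whose frame is \emph{approximately} a prescribed path in $\SL(n)$, and you only explicitly arrange $C^1$ matching at the window edges. The output of your replacement therefore still has small discontinuities in the covariant derivatives of orders $2,\dots,n$ at the two new seams: you have traded one jump for two smaller ones, and the curve remains in some $\LL M(\delta')$ rather than in $\LL M$. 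Closing this would require an interpolation lemma producing a non-degenerate arc with two prescribed $n$-jets at its endpoints, which is a nontrivial statement you neither prove nor cite. A secondary, also real, problem is the family-continuity issue you flag but do not resolve: when two jump points with non-vanishing jump amplitudes collide, your windows (bounded by the distance to the nearest neighbour) shrink to zero while the frame rotation to be realized inside them stays bounded away from the identity, and it is not clear the insertion can be made continuous through such a degeneration.

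The paper avoids both difficulties with a much softer, global argument: mollify the entire curve $\gamma_r$ with a standard mollifier of parameter $\tau\in[0,\e_1]$. Since $\gamma_r$ is $C^1$ and piecewise $C^n$ with only $\delta$-small jumps in the higher derivatives, the mollified curve is genuinely smooth everywhere and its derivatives are uniformly close to those of $\gamma_r$, hence it stays non-degenerate for $\e_1$ and $\delta$ small; no seams are created and no combinatorics of jump points enters. The endpoint frames are then corrected not by surgery but by a convex-combination blend $\tilde{\gamma}_{r,\tau}(t)={\gamma}_{r,\tau}(t)(1-f_{\e_2}(a_rt))+\gamma_0(a_rt)f_{\e_2}(a_rt)$ with a fixed reference curve $\gamma_0\in\LL M$, using a cutoff $f_{\e_2}$ all of whose derivatives vanish at the ends. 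Note also that the paper only needs the deformation to send $\LL M$ into itself, not to restrict to the identity there; insisting on the identity, as you do, is what forces you into the delicate jump-by-jump bookkeeping.
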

\begin{proof}
We have to show that any compact family of curves $\gamma_r$ in $\LL M(\delta)$ can be deformed into $\LL M$ by a homotopy that sends $\LL M$ to itself. 
Choose $\e_1,\e_2>0$, a curve $\gamma_0\in \LL M$ parametrized by $[0,1]$, and a family of functions $f_\e:[0,1]\to [0,1]$ as in the figure such that all the derivatives of $f_\e$ vanish at $0$ and $1$: 
$$\includegraphics{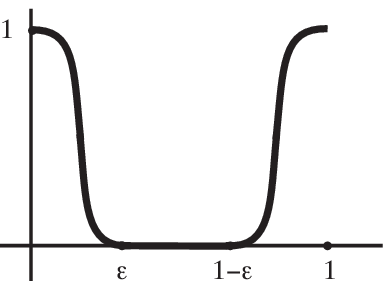}$$ 
Write $a_r$ for the length of the interval that parametrizes $\gamma_r$.

First, we perform a smoothing of each $\gamma_r$ with the help of the standard mollifier with the parameter $\tau\in [0,\e_1]$ (see \cite{Evans}). If $\e_1$ is sufficiently small, the resulting curves ${\gamma}_{r, \tau}$ will be non-degenerate, but their initial and final frames might differ from the chosen frame $F_0$. In order to remedy this, we replace  ${\gamma}_{r, \tau}$ with $$\tilde{\gamma}_{r, \tau}(t) = {\gamma}_{r, \tau}(t) (1-f_{\e_2}(a_r t))+\gamma_0(a_r t) f_{\e_2}(a_r t).$$
One can choose $\e_1$ and $\e_2$ so small that $\tilde{\gamma}_{r, \tau}$ is non-degenerate for all $\tau\in [0,\e_1]$ and that  $\tilde{\gamma}_{r, 0}$ is homotopic to $\gamma_r$. This gives the desired deformation.
\end{proof}

\subsection{The proof for curves in arbitrary $M$}
First, we describe the behaviour of non-degenerate curves under the exponential map.
\begin{lemma}\label{lemma:exp}
Let $\alpha:[0,a]\to T_{x}M$ be a non-degenerate curve in the tangent space to $x\in M$. There exists a positive $\lambda_0$ such that for all positive $\lambda\leq\lambda_0$ the curve $\exp_{x}(\lambda\alpha)$ in $M$ is non-degenerate.
\end{lemma}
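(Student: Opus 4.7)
The plan is to work in normal coordinates centered at $x$. In such coordinates the exponential map is the identity along rays, so the curve $\gamma_\lambda(t) := \exp_x(\lambda\alpha(t))$ has coordinate representation $\gamma_\lambda(t) = \lambda\alpha(t)$ literally. Moreover, the Christoffel symbols $\Gamma^i_{jk}$ of the Levi-Civita connection vanish at the origin, so $\Gamma^i_{jk}(\lambda\alpha(t)) = O(\lambda)$ uniformly for $t\in[0,a]$.

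Next I would expand the covariant derivatives $\gamma_\lambda^{(k)}$ in powers of $\lambda$ via the coordinate formula $(D_t V)^i = \dot V^i + \Gamma^i_{jk}(\gamma_\lambda)\,\dot\gamma_\lambda^j\,V^k$. An induction on $k$, starting from $\gamma_\lambda^{(1)}(t) = \lambda\dot\alpha(t)$, yields
\[
\gamma_\lambda^{(k)}(t) \;=\; \lambda\,\alpha^{(k)}(t) + O(\lambda^3)\qquad(k=1,\dots,n),
\]
where $\alpha^{(k)}$ denotes the ordinary $k$-th derivative of $\alpha$ in the vector space $T_xM$, and the error bound is uniform for $t\in[0,a]$. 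The reason every correction absorbs extra factors of $\lambda$ is that each application of $\Gamma(\gamma_\lambda)\dot\gamma_\lambda$ produces $O(\lambda)\cdot O(\lambda)=O(\lambda^2)$, and $t$-differentiation of terms of the form $\Gamma(\lambda\alpha)\cdot\lambda^m$ preserves their order in $\lambda$.

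Finally, let $M_\lambda(t)$ be the $n\times n$ matrix whose columns are $\gamma_\lambda^{(1)}(t),\dots,\gamma_\lambda^{(n)}(t)$ in the normal-coordinate basis. The expansion above shows that $\lambda^{-1} M_\lambda(t)$ converges uniformly in $t\in[0,a]$, as $\lambda\to 0$, to the matrix with columns $\alpha'(t),\dots,\alpha^{(n)}(t)$. Since $\alpha$ is non-degenerate and $[0,a]$ is compact, the determinant of this limit matrix is bounded below in absolute value by some $c>0$. Hence there exists $\lambda_0>0$ such that $|\det \lambda^{-1}M_\lambda(t)|\geq c/2$ for all $t\in[0,a]$ and all $0<\lambda\leq\lambda_0$; in particular $M_\lambda(t)$ is invertible, which is precisely the non-degeneracy of $\gamma_\lambda$.

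The only delicate point is the bookkeeping in the iterative expansion, namely verifying that the errors remain $o(\lambda)$ uniformly in $t$ through all $n$ differentiation steps. This reduces to boundedness of the Christoffel symbols and their first $n-1$ partial derivatives on a compact neighbourhood of the origin, together with the $C^n$ regularity of $\alpha$ on the compact interval $[0,a]$, both of which are automatic from the hypotheses.
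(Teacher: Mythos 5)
Your proof is correct and follows essentially the same route as the paper: pass to normal coordinates centred at $x$ (where $\exp_x$ is the identity and the Christoffel symbols vanish at the origin, hence are $O(\lambda)$ along $\lambda\alpha$), show inductively that the covariant derivatives of $\lambda\alpha$ agree with the suitably scaled ordinary derivatives of $\alpha$ up to higher-order errors in $\lambda$, and conclude by compactness of $[0,a]$. Your version is somewhat more explicit about the inductive bookkeeping and the final determinant estimate, but the underlying argument is the same.
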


\begin{proof}
In the normal coordinates around $x_0$ the exponential map is,
tautologically, the identity map. Therefore, all we need to prove is
that the difference between the usual derivatives, suitably scaled,
and the covariant derivatives of $\lambda\alpha$ along the vector
field $\lambda\alpha'$ is of the higher order in $\lambda$ than the
derivatives themselves.

This is clear for the first derivatives, since they simply coincide.
For the second derivatives, we have
$$(\lambda\alpha)^{(2)}=\nabla_{\lambda \alpha'} \lambda\alpha' =  \lambda^2(\alpha'' + \Gamma_{ij}^k\alpha'_i\alpha'_j\cdot e_k),$$
where $e_k$ are the basis vectors and $\alpha'_i$ is the $i$th component of $\alpha'$. Since for the metric connection
the Christoffel symbols $\Gamma_{ij}^k(\lambda\alpha)$ tend to zero
with $\lambda$, the case $n=2$ is also settled.

In general, the differences between the derivatives will involve
Christoffel symbols $\Gamma_{ij}^k(\lambda\alpha)$ and their
derivatives, which tend to zero even faster with $\lambda$. Hence,
for $\lambda$ small enough the first $n$ covariant derivatives of
$\lambda\alpha$ will be linearly independent for each $t\in [0,a]$.
\end{proof}

In fact, Lemma~\ref{lemma:exp} holds for compact families of curves.
Namely, for a family of curves $\alpha_r\subset T_{x_r}M$, where $r$
ranges over a compact set, we can find $\lambda_0$ as in the
statement of Lemma~\ref{lemma:exp}, the same for all the curves
$\alpha_r$. This follows directly from continuity of the Christoffel
symbols and their derivatives. As a consequence, we have:
\begin{corollary}\label{another}
For all $k\geq 0$ there is a well-defined map $\pi_k \LL \R^n\to  \pi_k \LL M$.
\end{corollary}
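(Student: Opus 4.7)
A class in $\pi_k\LL\R^n$ is represented by a compact family $\{\alpha_r\}_{r\in S^k}$ of non-degenerate based loops in $\R^n$. Identifying $\R^n$ with $T_{x_0}M$ in the frame-preserving way fixed at the start of Section~2, the parametric form of Lemma~\ref{lemma:exp} (made explicit in the paragraph immediately after its proof) supplies a uniform $\lambda_0>0$ such that every $\exp_{x_0}(\lambda_0\alpha_r)$ is non-degenerate in $M$. The basepoint condition $\exp_{x_0}(0)=x_0$ is automatic. The endpoint frame of each $\exp_{x_0}(\lambda_0\alpha_r)$ is, however, a rescaling of $F_0$ rather than $F_0$ itself, so in order to land in $\LL M$ on the nose I would interpolate each curve near its endpoints with a fixed non-degenerate bridge, in the same spirit as the correction $\tilde{\gamma}_{r,\tau}$ in the proof of Proposition~\ref{smoo}; the bridge depends only on $\lambda_0$ and can be chosen once for the whole family. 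This produces a continuous map $\Phi:S^k\to\LL M$.

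That $[\phi]\mapsto[\Phi(\phi)]$ descends to a well-defined map on homotopy classes reduces to two homotopy statements, both proved by one more application of the same parametric lemma, enlarging the compact parameter space each time. First, any two admissible scalings $\lambda,\lambda'\leq\lambda_0$ give homotopic outputs: applying the lemma over $S^k\times[\min(\lambda,\lambda'),\lambda_0]$ interpolates continuously through non-degenerate curves in $M$. Second, given a homotopy $H:S^k\times[0,1]\to\LL\R^n$ between two representatives, the lemma applied over the compact set $S^k\times[0,1]$ extracts a uniform $\lambda_0$, and $(r,s)\mapsto\exp_{x_0}(\lambda_0 H(r,s))$, equipped with the same endpoint correction, is the required homotopy between the images.

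The only non-routine ingredient is the uniform choice of $\lambda_0$ across a compact parameter space, which is precisely the content of the continuity-of-Christoffel-symbols observation recorded in the paragraph just before the corollary; the remaining steps are bookkeeping. The one small subtlety worth monitoring is the endpoint-frame matching, since under scaling the covariant derivatives of $\lambda_0\alpha_r$ at the endpoints are only rescalings of those of $\alpha_r$; this is handled by the Proposition~\ref{smoo}-style bridge, and uniformity of $\lambda_0$ in $r$ guarantees that the correction can be carried out continuously across $S^k$.
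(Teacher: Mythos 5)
Your argument is essentially the paper's: both rest on extracting a uniform $\lambda_0$ for a compact family from the parametric version of Lemma~\ref{lemma:exp} (continuity of the Christoffel symbols) and then repairing the endpoint frames of the scaled curves. The only difference is packaging --- the paper notes that the curves $\exp(\lambda\gamma_r(t/\lambda))$ land in $\LL M(\delta)$ and invokes Proposition~\ref{smoo} wholesale, whereas you inline the endpoint correction from that proposition's proof; your explicit verification of well-definedness by running the same lemma over $S^k\times[0,1]$ is exactly what the paper leaves implicit in the phrase ``for any compact family''.
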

\begin{proof}
Indeed, since the Christoffel symbols vanish at the origin, for any given $\delta$ and for any compact family $\gamma_r\in\LL \R^n$ we can find $\lambda_0$ small enough so that for all $\lambda<\lambda_0$ the curves $\exp(\lambda\gamma_r(t/\lambda))$ are well-defined and lie in $\LL M (\delta)$. Applying Proposition~\ref{smoo} we get the result.
\end{proof}

Now, let $$\Gamma:[0,a]\to FTM$$ be an arbitrary curve and
$$\gamma:[0,a]\to M$$ its projection to $M$. The pullback
$\gamma^*TM$ of the tangent bundle of $M$ to $[0,a]$ has a
trivialization given by $\Gamma$, and we can identify it with
$[0,a]\times \R^n$. There exists a positive $\e$ such that the map
$$
\exp_\Gamma: [0,a]\times B^n_\e \to M$$ given by $$(t, v) \to
\exp_{\gamma(t)} (v)$$ is well-defined. Clearly,  the restriction of
$\exp_\Gamma$ to $[0,a]\times 0$ is the curve $\gamma$. We stress
that $\exp_\Gamma$ depends on the choice of a basis in each tangent
space $T_{\gamma(t)}M$ and not just on the curve $\gamma$.
\medskip

Let us now prove that $\gamma$ can be approximated by a
non-degenerate curve, and, moreover, that this can be done in
families.

\medskip

Assume that $\alpha: [0,1]\to\R^n$ lies in $\LL\R^n$. 
It will be convenient to extend $\alpha$ to a periodic function on $\R$, with period 1, and keep the same notation for it. Let $\gamma_r: [0,a_r]\to M$ be a compact family of curves and let
$\Gamma_r: [0,a_r]\to FTM$ be a lifting of $\gamma_r$. 
\begin{lemma}\label{appr}
There exist
$\lambda_0>0$ and $N_0\in \mathbb N$ such that for all $N>N_0$ each curve
$${\gamma}_r^{[N]}=\exp_{\Gamma_r}\left(\frac{t}{\lambda_0 N},  \lambda_0 \alpha\left(\frac{t}{\lambda_0 a_r}\right)\right),$$ defined on the interval $[0, a_r\cdot \lambda_0 N]$,  is non-degenerate  in $M$. Moreover, for any $\delta>0$ the number $N_0$ can be chosen so that  the initial and the final frames of ${\gamma}_r^{[N]}$ are  $\delta$-close to the corresponding frames of $\gamma_r$.
\end{lemma}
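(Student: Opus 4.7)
The plan is to extend the argument of Lemma~\ref{lemma:exp} to a two-scale setting. Fix a point $t_0$ in the parameter interval, set $s_0=t_0/(\lambda_0 N)$, and work in normal coordinates at $\gamma_r(s_0)$, identifying $T_{\gamma_r(s_0)}M$ with $\R^n$ via $\Gamma_r(s_0)$; in these coordinates the Christoffel symbols vanish at the origin and are $O(\|\cdot\|)$ near it, while $\exp_{\gamma_r(s)}(v)$ reduces to $v$ at $s=s_0$. Introducing the slow variable $s=t/(\lambda_0 N)$ and the fast variable $u=t/(\lambda_0 a_r)$, the total time derivative splits as $d/dt=(\lambda_0 N)^{-1}\partial_s+(\lambda_0 a_r)^{-1}\partial_u$. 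Using this together with the identity $\nabla X=\partial X+\Gamma(T,X)$, one expands each covariant derivative $(\gamma_r^{[N]})^{(k)}(t_0)$.

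The expansion contains three types of terms. The leading one, obtained by applying only fast derivatives to $\lambda_0\alpha(u)$ and ignoring the Christoffel corrections, equals $\lambda_0^{1-k}a_r^{-k}\alpha^{(k)}(u)$ written in the frame $\Gamma_r(s)$. Every term involving at least one slow derivative $\partial_s$ carries a strictly negative power of $N$ and thus, at fixed $\lambda_0$, vanishes as $N\to\infty$. Every term picking up a Christoffel symbol or one of its derivatives is smaller than the leading order by a factor $O(\lambda_0^2)$: at the point $\exp_{\gamma_r(s)}(\lambda_0\alpha(u))$ the Christoffel symbols themselves are of size $O(\lambda_0)$, while subsequent differentiation of a Christoffel factor contributes $O(a_r^{-1})$ each time, cancelling only a single fast-derivative gain.

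Since $\alpha$ is non-degenerate and $1$-periodic, the determinant of the frame $(\alpha'(u),\ldots,\alpha^{(n)}(u))$ is bounded away from zero uniformly in $u$. Rescaling the $k$-th covariant derivative of $\gamma_r^{[N]}$ by $\lambda_0^{k-1}a_r^{k}$ therefore produces a perturbation of the non-degenerate frame of $\alpha$ by terms of orders $O(\lambda_0^2)$ and $o_N(1)$. Choosing $\lambda_0$ small enough---uniformly in $r$ by compactness of the family and continuity of the Christoffel symbols of $M$ along $\gamma_r$---and then $N_0$ large enough secures non-degeneracy for every $N>N_0$. Specialising the same expansion to $t=0$ and $t=a_r\lambda_0 N$ and using $\alpha(0)=\alpha(1)=0$ together with $\F_\alpha(0)=\F_\alpha(1)=F_0$ shows that the endpoint frames of $\gamma_r^{[N]}$ coincide with $\Gamma_r(0)$ and $\Gamma_r(a_r)$ up to corrections of the same two types, which drop below any prescribed $\delta$ for $N_0$ sufficiently large.

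The main obstacle is the multi-scale bookkeeping in the iterated covariant derivative $\nabla^{k}$: one has to identify the dominant contribution at each order and verify that every correction is either of slow-derivative type (smaller by a strictly negative power of $N$) or of Christoffel type (smaller by $O(\lambda_0^2)$), so that the two scalings $\lambda_0\to 0$ and $N\to\infty$ can be applied independently. Once this multi-scale expansion is in place, uniformity over the compact parameter family $r$ is a routine continuity argument.
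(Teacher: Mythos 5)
Your proof is correct and rests on the same two-scale idea as the paper's: the Christoffel corrections are smaller than the leading frame of $\alpha$ by a factor $O(\lambda_0^2)$ (this is exactly the content of Lemma~\ref{lemma:exp} and its family version, which fixes $\lambda_0$), while every term involving the slow motion of the base point carries a negative power of $N$ and is killed by taking $N_0$ large. The paper organizes the second point slightly differently --- it compares $\gamma_r^{[N]}$ on each period $[i,i+1]$ with the ``frozen'' coil $\exp_{\Gamma_r}(i/N,\alpha(t))$ and invokes $C^n$-closeness together with openness of non-degeneracy, rather than writing out the multi-scale expansion --- but the underlying estimates are the same.
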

Essentially, ${\gamma}_r^{[N]}$ is a ``telephone wire'' in the shape of $\gamma_r$, as in \cite{ShSal}.
\begin{proof}
For each $r,t$ we have an identification of $T_{\gamma_r(t)} M$ with $\R^n$ and thus we can speak of a copy of $\alpha$ in $T_{\gamma_r(t)} M$. By Lemma~\ref{lemma:exp} (or, rather, its version for families) there exists $\lambda_0$ such that the curve $\exp_{\gamma_r(t)} (\lambda \alpha)$ is non-degenerate in $M$ for all $\lambda\leq \lambda_0$. 
By rescaling $\alpha$ and reparametrizing $\gamma_r$ we can assume, without loss of generality, that $\lambda_0=1$ and $a_r=1$ for all $r$. In particular, we have that
$${\gamma}_r^{[N]}=\exp_{\Gamma_r}(t/N,   \alpha(t))$$ 
is defined on the interval $[0,N]$.

Now, for each $\e>0$ we can find $N_0$ big enough so that each curve $\exp_{\Gamma_r} (t/N, 
\alpha(t))$, restricted to $t\in [i, i+1]$ has all its covariant derivatives $\e$-close to
those of the curve $\exp_{\Gamma_r} (i/N, \alpha(t))$, for all
$N>N_0$ and for $i=0,\ldots, N-1$. 
In particular, this means that  we can choose $N_0$ so that
$\exp_{\Gamma_r} (t/N,  \alpha(t))$ for all $N>N_0$ is
non-degenerate. Moreover, this also implies the second statement of the lemma.
\end{proof}

In particular, if the family $\gamma_r$ is in $\LL M$ to begin with and $\Gamma_r$ is given by the  frame map, we see that for any $\delta>0$ the number $N$ can be chosen so that ${\gamma}_r^{[N]}\in \LL M(\delta)$. In fact, the homotopy class of  ${\gamma}_r^{[N]}$ can be explicitly described. 

\begin{lemma}\label{appr2}
If $\gamma_r\in\LL M$ for all $r$, $\Gamma_r$ is given by the  frame map, and $N_0$ is sufficiently big, 
then for all $N>N_0$ the family ${\gamma}_r^{[N]}$ is homotopic in $\LL M (\delta)$ to the family $\gamma_r\cdot \omega^N$, where $\omega=\exp_{x_0}\alpha$.
\end{lemma}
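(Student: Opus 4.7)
The plan is to adapt the cut-and-paste construction from the end of the proof of Lemma~\ref{sasha} to the curved setting, replacing each translated copy of $\omega^2$ in $\R^n$ by a copy of $\alpha^2$ transplanted to a point of $M$ via the exponential map and the frame $\Gamma_r$. I introduce an intermediate family $\tilde\gamma_r$, and show it is homotopic both to $\gamma_r^{[N]}$ and to $\gamma_r\cdot\omega^{N}$ in $\LL M(\delta)$.

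After rescaling so that $\lambda_0=1$ and $a_r=1$, I take $N$ even, split $\gamma_r$ into $N/2$ equal pieces $\gamma_r^i=\gamma_r|_{[2(i-1)/N,\,2i/N]}$, and set
$$\omega_{r,i}^{(2)}=\exp_{\gamma_r(2i/N)}\bigl(\alpha^{2}\bigr),$$
where $T_{\gamma_r(2i/N)}M$ is identified with $\R^n$ via $\Gamma_r(2i/N)=\F_{\gamma_r}(2i/N)$. Since the Christoffel symbols vanish at the origin of a normal chart, the first $n$ covariant derivatives of $\omega_{r,i}^{(2)}$ at each of its endpoints coincide with those of $\gamma_r$ at $2i/N$. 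Consequently, the concatenation
$$\tilde\gamma_r=\gamma_r^1\cdot\omega_{r,1}^{(2)}\cdot\gamma_r^2\cdot\omega_{r,2}^{(2)}\cdots\gamma_r^{N/2}\cdot\omega_{r,N/2}^{(2)}$$
lies in $\LL M(\delta)$ with Frenet frames matching exactly at every junction.

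To homotope $\tilde\gamma_r$ to $\gamma_r\cdot\omega^{N}$, set $\tau_i(s)=1-s(1-2i/N)$ for $s\in[0,1]$, so that $\tau_i(1)=2i/N$ and $\tau_i(0)=1$, and define $H_r^s$ to traverse $\gamma_r$ from $\tau_{i-1}(s)$ to $\tau_i(s)$ and then insert at $\gamma_r(\tau_i(s))$ the loop $\exp_{\gamma_r(\tau_i(s))}(\alpha^2)$ built from the frame $\Gamma_r(\tau_i(s))$. At $s=1$ we recover $\tilde\gamma_r$; at $s=0$ all insertion points collapse to $\gamma_r(1)=x_0$ with frame $F_0$, producing $\gamma_r\cdot\omega^{N}$. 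The family version of Lemma~\ref{lemma:exp} keeps each inserted loop non-degenerate, and the Frenet-frame matching persists throughout, so $H_r^s\in\LL M(\delta)$ for every $s$.

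The more delicate step is showing $\gamma_r^{[N]}\simeq\tilde\gamma_r$ in $\LL M(\delta)$. On each interval $[2(i-1),2i]\subset[0,N]$ the telephone wire $\gamma_r^{[N]}(t)=\exp_{\Gamma_r}(t/N,\alpha(t))$ differs from the ``frozen'' curve $\exp_{\gamma_r(2i/N)}(\alpha(t))$ only through the slow variation of $\Gamma_r(t/N)$ and Christoffel-symbol corrections of order $1/N$. After a reparametrization concentrating each oscillation near the junction point $2i/N$, the two curves become $C^n$-close and are deformed into one another through non-degenerate curves by the estimate used in the proof of Lemma~\ref{appr}. The main obstacle is this uniform $C^n$-approximation: one must control the Christoffel-symbol corrections over the compact family $\gamma_r$ and preserve non-degeneracy throughout the deformation, which forces $N_0$ to be taken sufficiently large.
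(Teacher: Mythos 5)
Your proposal follows essentially the same route as the paper's proof: approximate the telephone wire $\gamma_r^{[N]}$ by a concatenation of short arcs of $\gamma_r$ with ``frozen'' copies of the small loop inserted at the junction points, and then slide all the insertion points to the endpoint, where each frozen loop becomes a copy of $\omega$. (The paper inserts $N$ single copies of $\alpha$ rather than $N/2$ copies of $\alpha^2$; the doubling you carried over from Lemma~\ref{sasha} is harmless here but also unnecessary, since in this lemma nothing needs to be null-homotopic in $\SL(n)$.) Your sliding homotopy is the same as the one in the paper, given there by $\gamma_r^i |_{[i/N,  i/N+\tau]}\cdot \exp_{\Gamma_r}(i/N+\tau,\alpha)\cdot \gamma_r^i |_{[ i/N+\tau, (i+1)/N]}$.

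One intermediate assertion is incorrect, though not fatally so: the claim that, because the Christoffel symbols vanish at the centre of a normal chart, the first $n$ covariant derivatives of the inserted loop at its endpoints \emph{coincide} with the frame of $\gamma_r$ at the junction. The vanishing of $\Gamma^k_{ij}$ at the centre only gives this for the first two covariant derivatives; from order three on, the covariant derivatives of $\exp_{\gamma_r(2i/N)}(\lambda_0\alpha)$ pick up corrections involving the derivatives of the Christoffel symbols at the origin, and in normal coordinates only the fully symmetrized parts of these vanish. For instance, the fourth covariant derivative contains a term of the form $\partial_l\Gamma^k_{ij}\,\alpha'^l\alpha'^i\alpha''^j$, which survives in general, so for $n\geq 4$ the frames at the junctions match only approximately. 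The correct statement is that they are $\delta$-close, with $\delta$ controlled by $\lambda_0$ as in Lemma~\ref{lemma:exp} --- which is exactly why the lemma is formulated in $\LL M(\delta)$ rather than in $\LL M$; your argument goes through once ``matching exactly'' is replaced by ``$\delta$-close'' throughout (including along the sliding homotopy). Finally, your ``delicate step'' is stated but not really carried out: the mechanism by which the short arcs $\gamma_r^i$, present in $\tilde\gamma_r$ but absent from $\gamma_r^{[N]}$, get absorbed is, in the paper, the observation that the frozen loop $\omega_i$ is $\e$-close to the concatenation $\gamma_r^i\cdot\omega_i$ because $\gamma_r^i$ has length $O(1/N)$; after that a linear interpolation provides the homotopy. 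Your ``reparametrization concentrating each oscillation near the junction'' is the same idea and should be made explicit in this form.
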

\begin{proof}
Let us assume, as in the previous proof, that $a_r=1$ and $\lambda_0=1$. Given $N\in \mathbb N$ we write
$$\omega_i:=\exp_{\Gamma_r} (i/N, \alpha(t)),$$
defined on $[i,i+1]$, and
$$\gamma_r^i:=\gamma_r|_{[i/N,(i+1)/N]}.$$
 
Choose $\e>0$. As we have noted before, for $N$ big enough  ${\gamma}_r^{[N]}$, restricted to $t\in [i, i+1]$ has its first $n$ covariant derivatives $\e$-close to those of the curve $\omega_i$. On the other hand, it is clear that, again, for $N$ sufficiently big, $\omega_i$ is $\e$-close to the concatenation $\gamma_r^i \cdot \omega_i$, reparametrized by $[i,i+1]$. Therefore, we see that the curve ${\gamma}_r^{[N]}$ is $2\e$-close, together with its derivatives, to the suitably parametrized concatenation
\begin{equation}\label{conc}
\gamma_r^0\cdot \omega_1\cdot \gamma_r^1 \cdot \omega_2 \cdot\ldots \cdot\gamma_r^{N-1} \cdot \omega_N.
\end{equation}
This means that for $\e$ small enough these curves are homotopic, since a linear interpolation between them would provide the required homotopy in $\LL M(\delta)$.

On the other hand, the curve (\ref{conc}) is homotopic to $\gamma_r\cdot \omega^N$ in $\LL M(\delta)$. Indeed, $\omega_i\cdot \gamma_r^i$ can be carried to $\gamma_r^i\cdot\omega_{i+1}$, with the help of the homotopy given by $$\gamma_r^i |_{[i/N,  i/N+\tau]}\cdot \exp_{\Gamma_r}(i/N+\tau,\alpha)\cdot \gamma_r^i |_{[ i/N+\tau, (i+1)/N]}.$$
Here $\tau\in [0,1]$ is the parameter of the homotopy. It remains to notice that $\gamma_r^0 \cdot\ldots \cdot\gamma_r^{N-1}=\gamma_r$ and $\omega_N=\omega$.
\end{proof}

\begin{remark}
In the course of the last proof we have, actually, shown that the left and the right multiplications by $\omega$ in $\LL M$ are homotopic.
\end{remark}

The curve 
$\gamma_r^{[N]}$ is homotopic
to $\gamma_r$. Hence, a representative of any homotopy class 
$\nu\in\pi_k \Omega M$ can be approximated in this way by a class in
the subspace consisting of non-degenerate curves.
Moreover, representing $\nu$ by curves which are constant in a neighbourhood of their endpoints, 
we can achieve that  the inital and the final Frenet frames of the corresponding 
non-degenerate curves coincide with the chosen frame. In particular, $\nu$ comes from a class is in $\pi_k \LL M$. 

\medskip

Now, with the principal bundle $$\SL(n)\to FTM\to M$$ we can associate
the fibration
$$\Omega \SL(n)\to \Omega FTM\to \Omega M.$$

Take some class $\eta\in \pi_k\Omega FTM$ and denote by $\nu$ its  projection to 
$\pi_k\Omega M$. Approximate $\nu$ by a class
$\nu'\in\pi_k \LL\M$ and let $F_*(\nu')$ be the image of $\nu'$ under the
Frenet frame map. The class $F_*(\nu')\eta^{-1}$ projects to zero in
$\pi_k\Omega M$, hence it is in the image of $\pi_k \Omega \SL(n)$.
This, in turn, can be approximated by a class in $\pi_k \LL\R^n$, well-defined up to
localization by $\alpha$, which, by Corollary~\ref{another}, gives rise to a class $c\in \pi_k\LL M$. Then $$F_*(c^{-1}\nu')=F_*(c)^{-1} F_*(\nu')\eta^{-1}\eta =  \eta.$$

Now, take a homotopy class  $c\in \pi_k\LL M$ that vanishes in $\pi_k FTM$. Approximating each curve in the null-homotopy of $c$ by non-degenerate curves using Lemma~\ref{appr} what we get is a null-homotopy of the image of $c$ in $\LL M_\omega$.  This shows that $\LL M_\omega$ and $\Omega FTM$ are weakly homotopy equivalent.

\section{On the group completion of $\LL M_+$}\label{theoneafter}

Here we shall show how Theorem~\ref{completion} follows from Theorem~\ref{main}. Let us first recall the group completion theorem \cite{MS, Qu}. Let $\M$ be a topological monoid and $\pi=\pi_0 \M$ the monoid of its path components. The homology of $\M$ is an algebra, with the product induced by that of $\M$, and $\pi$ can be thought of as a multiplicative subset of $H_*(\M)$. We say that the localization $H_*(\M)[\pi^{-1}]$ {\em can be calculated by right fractions} if the following conditions hold:
\begin{enumerate}
\item for every $s_1,s_2\in \pi$ there exist $t_1,t_2\in \pi$ with $s_1t_1=s_2t_2$;
\item given $s,s_1, s_2\in \pi$ such that $ss_1=ss_2$, there exists $t\in\pi$ with $s_1t=s_2t$;
\item given $r\in H_*(\M)$ and $s\in\pi$ with $sr=0$, there exists $t\in\pi$ with $rt=0$;
\item given $r\in H_*(\M)$ and $s\in\pi$ there exist $r'\in H_*(\M)$ and $t\in\pi$ with $rt=sr'$.
\end{enumerate}
The group completion theorem says that if the localization $H_*(\M)[\pi^{-1}]$ can be calculated by right fractions, then
$$
      H_*(\M)[\pi^{-1}]\approx H_*(\Omega B\M),
$$
where the homology is taken with arbitrary (possibly, twisted) coefficients. Moreover, the localization with respect to $\pi$ is induced by the canonical map $\M\to\Omega B\M$.

\begin{lemma}
The localization $H_*(\LL M_+)[\pi_0\LL M_+^{-1}]$ can be calculated by right fractions. 
\end{lemma}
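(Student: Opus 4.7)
The plan is to use two key inputs: the Remark following Lemma~\ref{appr2}, which shows that left and right multiplications by $\omega$ are homotopic as endomorphisms of $\LL M$—so $\omega$ is central in $\pi_0\LL M_+$ and acts identically on $H_*(\LL M_+)$ from either side—together with Theorem~\ref{main}, which identifies $\pi_0\LL M_\omega$ with the group $\pi_1 FTM$. Since $\pi_0\LL M_\omega$ is obtained from $\pi_0\LL M_+$ by inverting the central element $\omega$, the decisive structural fact is that this localization is already a group.

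The main preliminary step is a quasi-invertibility lemma: for every $s\in\pi_0\LL M_+$ one can find $u,u'\in\pi_0\LL M_+$ and exponents $m,m'\geq 0$ with
$$
su=\omega^m \qquad\text{and}\qquad u's=\omega^{m'}
$$
holding in $\pi_0\LL M_+$ itself, not merely in the localization. To produce $u$, take the two-sided inverse of the image of $s$ in the group $\pi_0\LL M_\omega$, written as a formal expression $\omega^{-k}v$ with $v\in\pi_0\LL M_+$. The identity $s\cdot\omega^{-k}v=1$ in the localization says that $sv$ and $\omega^k$ represent the same element of the direct limit, so for some $N$ one has $\omega^N sv=\omega^{N+k}$ in $\pi_0\LL M_+$; centrality of $\omega$ then gives $s(v\omega^N)=\omega^{N+k}$, and we set $u=v\omega^N$, $m=N+k$. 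A symmetric argument yields $u'$.

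With this, each of the four conditions is a one-line manipulation. For (1), take $s_iu_i=\omega^{m_i}$ and set $t_1=u_1\omega^{m_2}$, $t_2=u_2\omega^{m_1}$, so that both $s_it_i$ equal $\omega^{m_1+m_2}$. For (2), if $ss_1=ss_2$ and $u's=\omega^{m'}$, left-multiplying by $u'$ yields $\omega^{m'}s_1=\omega^{m'}s_2$, and centrality lets us take $t=\omega^{m'}$. For (3), if $sr=0$ in $H_*$ and $u's=\omega^{m'}$, then $\omega^{m'}r=u'(sr)=0$, and the left--right coincidence of the $\omega$-action on homology gives $r\omega^{m'}=0$. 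For (4), with $su=\omega^m$, set $r'=ur$ and $t=\omega^m$; then $sr'=\omega^m r=r\omega^m=rt$, again using the left--right coincidence on $H_*$.

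The only nontrivial step is the quasi-invertibility lemma, and it is precisely there that Theorem~\ref{main} is essential: without the identification $\pi_0\LL M_\omega\cong\pi_1 FTM$ there is no source of inverses to work with, and the four conditions would be entirely opaque. The remaining verifications are formal. The passage from $\LL M$ to $\LL M_+$ is cosmetic, as the adjoined disjoint basepoint plays only the role of the strict unit required by the monoid formalism and contributes nothing new to any of the four conditions.
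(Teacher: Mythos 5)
Your proof is correct and rests on the same two pillars as the paper's argument: the centrality of $\omega$ in $\pi_0\LL M_+$ together with the agreement of left and right multiplication by $\omega$ on $H_*(\LL M_+)$ (the Remark after Lemma~\ref{appr2}), and the fact that inverting $\omega$ makes $\pi_0\LL M_+$ a group (which indeed comes from Theorem~\ref{main}). The packaging, however, is genuinely different. The paper verifies each condition by passing to a target in which the relevant classes are invertible --- the group completion of $\pi$ for (1) and (2), and $H_*(\Omega FTM)$ for (3) and (4) --- and then appealing, at each step, to the principle that two classes with equal images in the direct limit already agree after multiplication by some $\omega^k$; for (3) and (4) this implicitly requires identifying $H_*(\LL M_\omega)$ with the localization of $H_*(\LL M_+)$ at $\omega$. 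Your quasi-invertibility lemma, which produces honest identities $su=\omega^m$ and $u's=\omega^{m'}$ in $\pi_0\LL M_+$ itself, concentrates the only direct-limit argument into a single $\pi_0$ computation, after which all four conditions become one-line algebra inside $H_*(\LL M_+)$; in particular you never need the homology of the telescope. It also makes explicit a point the paper elides: the inverse of $[s]$ in $\pi_0\LL M_\omega$ is a priori only a fraction $\omega^{-k}v$ rather than the class of an element of $\pi$, and your passage to $u=v\omega^N$ repairs this. The one hypothesis you share with the paper without comment is that $\pi_0\LL M_\omega$ is the direct limit of the monoids $\pi_0\LL M$ (so that an equality in the localization descends to a finite stage); this is harmless since the telescope is formed along embeddings, but it is the place where your ``for some $N$'' is actually justified.
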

\begin{proof} Let $\omega\in\pi=\pi_0\LL M$ be a class coming from $\pi_0\LL \R^n$. 
Observe that we have the following two facts: 
\begin{itemize}
\item[(a)] $\pi$ becomes a group if the class of $\omega$ is made invertible;
\item[(b)] the class of $\omega$ is in the centre of $\pi$; moreover, by the remark after Lemma~\ref{appr2}, the left and the right multiplications by $\omega$ give the same map on $H_*(\LL M_+)$.
\end{itemize}
For $t\in\pi$ denote by $[t]$ its class in the group completion of $\pi$. By (a) there exist $u_1$ and $u_2$ such that $u_1=[s_1]^{-1}$ and $u_2=[s_2]^{-1}$. Therefore,
$$s_1\cdot u_1\omega^{k}= s_2\cdot u_2\omega^{k}$$
for some $k$; this establishes (1). In order to prove (2) notice that if  $ss_1=ss_2$,  we have $[s_1]=[s_2]$. In particular, $s_1\cdot \omega^{k}= s_2\cdot \omega^{k}$
for some $k$.

Now, for $r\in H_* (\LL M_+)$ let $[r]$ be its image in $H_* (\Omega FT M)$. If $[r_1]=[r_2]$, there exists $k$ such that $r_1\omega^{k}=r_2\omega^{k}$. In order to establish (3), note that $sr=0$ implies $[s][r] =0$ and, hence,  $[r]=0$. This means that $r\cdot\omega^k=0$ for some $k$.
Finally, for $s\in\pi$ find $u$ with $[u]=[s]^{-1}$. Then $[s][u][r]=[r]$ in $H_* (\Omega FT M)$, which implies that $r\cdot\omega^{k}=s\cdot ur\omega^{k}$ for some $k$. This establishes (4).
\end{proof}

In particular, by the group completion theorem, $H_*(\Omega B\LL M_+)$ is obtained from $H_*(\LL M_+)$ by localization with respect to $\pi_0 \LL M_+$. 

On the other hand, the map $\LL M_+\to \Omega B \LL M_+$ factors through $\LL M_{\omega}$. Indeed, the multiplication by $\omega$ (or any other element) in  $\LL M_+$ is sent to a multiplication by an invertible, up to homotopy, element in $\Omega  B \LL M_+$; the localization with respect to an invertible element has no effect. Note that the homology of $\LL M_{\omega}$ is obtained from $H_*(\LL M_+)$ by localization with respect to the class of $\omega$. Localizing it any further with respect to other classes of $\pi_0\LL M_+$ has no effect since $\pi_0 \LL M_{\omega}$ is already a group. In particular, this means that the map $\LL M_\omega\to \Omega  B \LL M_+$ induces an isomorphism in homology.

The standard argument shows that the fundamental group of $\LL M_\omega$ is abelian. Indeed, since $\omega$ commutes, up to homotopy, with any compact subset of  $\LL M$,
the homotopy groups of the space $\LL M_\omega$ have the same properties as the homotopy groups of an $h$-space, and $h$-spaces have abelian fundamental groups. As a consequence, the homology equivalence  $\LL M_\omega\to \Omega  B \LL M_+$ is a weak homotopy equivalence. Now, applying Theorem~\ref{main}, we see that $\Omega  B \LL M_+$ is weakly homotopy equivalent to $\Omega FTM$. Since both spaces have homotopy types of CW-complexes (see \cite{Milnor}), they are homotopy equivalent.

\end{document}